\date{\today}
\newtheorem{theorem}{Theorem}
\newtheorem{proposition}{Proposition}
\newtheorem{corollary}{Corollary}
\newtheorem{lemma}{Lemma}
\theoremstyle{definition}
\newtheorem{example}{Example}
\newtheorem{remark}{Remark}
\begin{document}

\title[On feebly compact shift-continuous topologies on the semilattice $\exp_n\lambda$]{On feebly compact shift-continuous topologies on the semilattice $\exp_n\lambda$}
\author{Oleg Gutik and Oleksandra Sobol}
\address{Faculty of Mechanics and Mathematics,
National University of Lviv, Universytetska 1, Lviv, 79000, Ukraine}
\email{o\_\,gutik@franko.lviv.ua, ovgutik@yahoo.com, olesyasobol@mail.ru}

\keywords{Topological semilattice, semitopological semilattice, compact, countably compact, feebly compact, $H$-closed, semiregular space, regular space}

\subjclass[2010]{Primary 22A26, 22A15,  Secondary 54D10, 54D30, 54H12}

\begin{abstract}
We study feebly compact topologies $\tau$ on the semilattice $\left(\exp_n\lambda,\cap\right)$ such that $\left(\exp_n\lambda,\tau\right)$ is a semitopological semilattice and prove that for any shift-continuous $T_1$-topology $\tau$ on $\exp_n\lambda$ the following conditions are equivalent:
$(i)$~$\tau$ is countably pracompact; $(ii)$ $\tau$ is feebly compact; $(iii)$ $\tau$ is $d$-feebly compact; $(iv)$ $\left(\exp_n\lambda,\tau\right)$ is an $H$-closed space.
\end{abstract}

\maketitle


\begin{center}
  \emph{\textbf{Dedicated to the memory of Professor Vitaly Sushchanskyy}}
\end{center}

\bigskip


We shall follow the terminology of~\cite{Carruth-Hildebrant-Koch-1983-1986, Engelking-1989, Gierz-Hofmann-Keimel-Lawson-Mislove-Scott-2003, Ruppert-1984}. If $X$ is a topological space and $A\subseteq X$, then by $\operatorname{cl}_X(A)$ and $\operatorname{int}_X(A)$ we denote the closure and the interior of $A$ in $X$, respectively. By $\omega$ we denote the first infinite cardinal and by $\mathbb{N}$ the set of positive integers.

A subset $A$ of a topological space $X$ is called \emph{regular open} if $\operatorname{int}_X(\operatorname{cl}_X(A))=A$.

We recall that a topological space $X$ is said to be
\begin{itemize}
  \item \emph{quasiregular} if for any non-empty open set $U\subset X$ there exists a non-empty open set $V\subset U$ such that $\operatorname{cl}_X(V) \subseteq U$;
  \item \emph{semiregular} if $X$ has a base consisting of regular open subsets;
  \item \emph{compact} if each open cover of $X$ has a finite subcover;
  \item \emph{countably compact} if each open countable cover of $X$ has a finite subcover;
  \item \emph{countably compact at a subset} $A\subseteq X$ if every infinite subset $B\subseteq A$  has  an  accumulation  point $x$ in $X$;
  \item \emph{countably pracompact} if there exists a dense subset $A$ in $X$  such that $X$ is countably compact at $A$;
  \item \emph{feebly compact} (or \emph{lightly compact}) if each locally finite open cover of $X$ is finite~\cite{Bagley-Connell-McKnight-Jr-1958};
  \item $d$-\emph{feebly compact} (or \emph{\textsf{DFCC}}) if every discrete family of open subsets in $X$ is finite (see \cite{Matveev-1998});
  \item \emph{pseudocompact} if $X$ is Tychonoff and each continuous real-valued function on $X$ is bounded.
\end{itemize}
According to Theorem~3.10.22 of \cite{Engelking-1989}, a Tychonoff topological space $X$ is feebly compact if and only if $X$ is pseudocompact. Also, a Hausdorff topological space $X$ is feebly compact if and only if every locally finite family of non-empty open subsets of $X$ is finite \cite{Bagley-Connell-McKnight-Jr-1958}.  Every compact space and every sequentially compact space are countably compact, every countably compact space is countably pracompact, and every countably pracompact space is feebly compact (see \cite{Arkhangelskii-1992}), and every $H$-closed space is feebly compact too (see \cite{Gutik-Ravsky-2015a}). Also, it is obvious that every feebly compact space is $d$-feebly compact.

A \emph{semilattice} is a commutative semigroup of idempotents. On  a semilattice $S$ there exists a natural partial order: $e\leqslant f$
\emph{if and only if} $ef=fe=e$. For any element $e$ of a semilattice $S$ we put
\begin{equation*}
  {\uparrow}e=\left\{f\in S\colon e\leqslant f\right\}.
\end{equation*}

A {\it topological} ({\it semitopological}) {\it semilattice} is a topological space together with a continuous (separately continuous) semilattice operation. If $S$ is a~semilattice and $\tau$ is a topology on $S$ such that $(S,\tau)$ is a topological semilattice, then we shall call $\tau$ a \emph{semilattice} \emph{topology} on $S$, and if $\tau$ is a topology on $S$ such that $(S,\tau)$ is a semitopological semilattice, then we shall call $\tau$ a \emph{shift-continuous} \emph{topology} on~$S$.

For an arbitrary positive integer $n$ and an arbitrary non-zero cardinal $\lambda$ we put
\begin{equation*}
  \exp_n\lambda=\left\{A\subseteq \lambda\colon |A|\leqslant n\right\}.
\end{equation*}

It is obvious that for any positive integer $n$ and any non-zero cardinal $\lambda$ the set $\exp_n\lambda$ with the binary operation $\cap$ is a semilattice. Later in this paper by $\exp_n\lambda$ we shall denote the semilattice $\left(\exp_n\lambda,\cap\right)$.

This paper is a continuation of \cite{Gutik-Sobol-2016} where we study feebly compact topologies $\tau$ on the semilattice $\exp_n\lambda$ such that $\left(\exp_n\lambda,\tau\right)$ is a semitopological semilattice. Therein, all compact semilattice $T_1$-topologies on $\exp_n\lambda$ were described. In \cite{Gutik-Sobol-2016} it was proved that for an arbitrary positive integer $n$ and an arbitrary infinite cardinal $\lambda$ every $T_1$-semitopological countably compact semilattice $\left(\exp_n\lambda,\tau\right)$ is a compact topological semilattice. Also, there  we construct a countably pracompact $H$-closed quasiregular non-semiregular topology $\tau_{\operatorname{\textsf{fc}}}^2$ such that $\left(\exp_2\lambda,\tau_{\operatorname{\textsf{fc}}}^2\right)$ is a semitopological semilattice with the discontinuous semilattice operation and show that for an arbitrary positive integer $n$ and an arbitrary infinite cardinal $\lambda$ a semiregular feebly compact semitopological semilattice $\exp_n\lambda$ is a compact topological semilattice.

In this paper we show that for any shift-continuous $T_1$-topology $\tau$ on $\exp_n\lambda$ the following conditions are equivalent:
$(i)$ $\tau$ is countably pracompact; $(ii)$ $\tau$ is feebly compact; $(iii)$ $\tau$ is $d$-feebly compact; $(iv)$~$\left(\exp_n\lambda,\tau\right)$ is an $H$-closed space.


The proof of the following lemma is similar to Lemma~4.5 of~\cite{Bardyla-Gutik-2016} or Proposition~1 from~\cite{Arkhangelskii-1985}.

\begin{lemma}\label{lemma-1}
Every Hausdorff $d$-feebly compact topological space with a dense discrete subspace is countably pracompact.
\end{lemma}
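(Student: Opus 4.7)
The plan is to take the dense discrete subspace $D$ itself as the witnessing set for countable pracompactness and to show that every infinite $B\subseteq D$ has an accumulation point in $X$. So I would argue by contradiction, assuming that $B\subseteq D$ is infinite and has no accumulation point in $X$.

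Unpacking the hypothesis, for every $x\in X$ there is an open neighbourhood $O_x$ with $O_x\cap(B\setminus\{x\})=\emptyset$. Taking $x\notin B$ gives $O_x\cap B=\emptyset$, so $B$ is closed in $X$; taking $x\in B$ gives $O_x\cap B=\{x\}$, so $B$ is (relatively) discrete in $X$.

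The crucial step is to show that every point of $B$ is actually isolated in $X$. Using the discreteness of $D$, I would fix $b\in B$ and pick an open $U_b\ni b$ with $U_b\cap D=\{b\}$. Since $B$ is closed, $U_b\setminus B$ is open in $X$, and its intersection with $D$ is contained in $\{b\}\setminus B=\emptyset$; density of $D$ then forces $U_b\setminus B=\emptyset$, i.e.\ $U_b\subseteq B\subseteq D$. Combined with $U_b\cap D=\{b\}$ this yields $U_b=\{b\}$, so $\{b\}$ is open in $X$.

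To finish, the family $\bigl\{\{b\}:b\in B\bigr\}$ is then an infinite family of non-empty open subsets of $X$, and it is discrete: each $b\in B$ has the singleton open neighbourhood $\{b\}$, which meets a single member of the family, while each $x\in X\setminus B$ has the open neighbourhood $X\setminus B$, which meets none. This contradicts the $d$-feeble compactness of $X$ and completes the argument. The main obstacle I anticipate is the middle step forcing $U_b=\{b\}$, where closedness of $B$, discreteness of $D$, and density of $D$ must be used simultaneously; once this is in hand, the remaining bookkeeping is straightforward.
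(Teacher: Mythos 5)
Your proof is correct and is essentially the intended argument: the paper gives no explicit proof, deferring to Lemma~4.5 of Bardyla--Gutik, and the standard route there is exactly yours --- every point of a dense discrete subspace is isolated (your middle step forcing $U_b=\{b\}$), so an infinite $B\subseteq D$ without accumulation points would give an infinite discrete family of open singletons, contradicting $d$-feeble compactness. All steps check out, including the verification that $B$ is closed so that $X\setminus B$ witnesses discreteness of the family at points outside $B$.
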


We observe that by Proposition~1 from \cite{Gutik-Sobol-2016} for an arbitrary positive integer $n$ and an arbitrary infinite cardinal $\lambda$ every  shift-continuous $T_1$-topology $\tau$ on $\exp_n\lambda$ is functionally Hausdorff and quasiregular, and hence it is Hausdorff.

\begin{proposition}\label{proposition-2}
Let $n$ be an arbitrary positive integer and $\lambda$ be an arbitrary infinite cardinal. Then for every
$d$-feebly compact shift-continuous $T_1$-topology $\tau$ on $\exp_n\lambda$ the subset
$\exp_n\lambda\setminus\exp_{n-1}\lambda$ is dense in $\left(\exp_n\lambda,\tau\right)$.
\end{proposition}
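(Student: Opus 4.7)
The plan is to argue by contrapositive: assuming $\exp_n\lambda\setminus\exp_{n-1}\lambda$ is not dense in $(\exp_n\lambda,\tau)$, I shall construct an infinite discrete family of open sets, contradicting $d$-feeble compactness. The cornerstone observation is that, for every $D\in\exp_n\lambda$, the upper set ${\uparrow}D$ is clopen. Closedness is immediate: ${\uparrow}D=\{C:D\cap C=D\}$ is the preimage of the $T_1$-closed singleton $\{D\}$ under the continuous shift $C\mapsto D\cap C$. For openness one uses the finite subsemilattice $\exp D=\{B:B\subseteq D\}\subseteq\exp_n\lambda$: its induced topology is $T_1$ and finite, hence discrete, so some open $O\subseteq\exp_n\lambda$ satisfies $O\cap\exp D=\{D\}$; then the same shift pulls $O$ back to exactly ${\uparrow}D$. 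In particular, whenever $|D|=n$ the singleton $\{D\}={\uparrow}D$ is open.

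Next, let $U\subseteq\exp_{n-1}\lambda$ be open and non-empty, and fix $A\in U$ with $k=|A|$ maximal. By maximality $U\cap{\uparrow}A=\{A\}$, so $\{A\}$ is open. Because $\lambda$ is infinite one may pick pairwise disjoint $F_1,F_2,\ldots\subseteq\lambda\setminus A$ with $|F_i|=n-k$ and set $D_i=A\cup F_i$; each $|D_i|=n$, hence $\{D_i\}$ is open. I claim $\{\{D_i\}\}_{i\in\omega}$ is a discrete family of open subsets of $\exp_n\lambda$, which at once contradicts $d$-feeble compactness.

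To verify discreteness at an arbitrary $C\in\exp_n\lambda$ I would split into cases and exhibit an open neighbourhood meeting at most one $\{D_i\}$: take $\{D_j\}$ if $C=D_j$; the open singleton $\{C\}$ if $|C|=n$ and $C\notin\{D_i\}_{i\in\omega}$; the open singleton $\{A\}$ if $C=A$; the clopen ${\uparrow}C$ if $A\subsetneq C$, since then ${\uparrow}C$ meets $\{D_i\}$ precisely when $F_i\supseteq C\setminus A\neq\emptyset$, forcing uniqueness of $i$ by pairwise disjointness of the $F_i$; and finally, if $A\not\subseteq C$, the preimage under the shift $C'\mapsto A\cap C'$ of any open $V\ni A\cap C$ with $A\notin V$ (available by the Hausdorffness proved in Proposition~1 of \cite{Gutik-Sobol-2016}), which misses every $D_i$ because $A\cap D_i=A\notin V$. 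I expect the last case to be the main obstacle: order-theoretic neighbourhoods such as ${\uparrow}C$ sweep up every $D_i$ at once, and only by pulling back a Hausdorff separation of $A$ from $A\cap C$ along the shift by $A$ can one produce a neighbourhood of $C$ avoiding the entire family simultaneously; both shift-continuity and the $T_1$ axiom are essential here.
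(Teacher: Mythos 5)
Your proof is correct and follows essentially the same route as the paper's: locate an isolated point $A$ of maximal cardinality inside a non-empty open subset of $\exp_{n-1}\lambda$, then produce infinitely many isolated maximal elements $D_i=A\cup F_i$ with pairwise disjoint $F_i$ and show that the family of singletons $\left\{\{D_i\}\right\}$ is discrete, contradicting $d$-feeble compactness. The only cosmetic difference is that the paper first passes to the clopen subsemilattice ${\uparrow}A\cong\exp_{n-k}\lambda$ with isolated zero (so discreteness at points outside ${\uparrow}A$ is automatic), whereas you verify discreteness at every point of $\exp_n\lambda$ directly, handling the points $C$ with $A\not\subseteq C$ by pulling back a $T_1$-separation of $A\cap C$ from $A$ along the shift $C'\mapsto A\cap C'$.
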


\begin{proof}
Suppose to the contrary that there exists a $d$-feebly compact
shift-continuous $T_1$-topology $\tau$ on $\exp_n\lambda$ such that
$\exp_n\lambda\setminus\exp_{n-1}\lambda$ is not dense in
$\left(\exp_n\lambda,\tau\right)$. Then there exists a point
$x\in\exp_{n-1}\lambda$ of the space
$\left(\exp_n\lambda,\tau\right)$ such that
$x\notin\operatorname{cl}_{\exp_n\lambda}(\exp_n\lambda\setminus\exp_{n-1}\lambda)$.
This implies that there exists an open neighbourhood $U(x)$ of $x$
in $\left(\exp_n\lambda,\tau\right)$ such that $U(x)\cap
\left(\exp_n\lambda\setminus\exp_{n-1}\lambda\right)=\varnothing$.
The definition of the semilattice $\exp_n\lambda$ implies that every
maximal chain in $\exp_n\lambda$ is finite and hence there exists a
point $y\in U(x)$ such that ${\uparrow}y\cap U(x)=\{y\}$. By
Proposition~1$(iii)$ from \cite{Gutik-Sobol-2016}, ${\uparrow}y$ is
an open-and-closed subset of $\left(\exp_n\lambda,\tau\right)$ and
hence ${\uparrow}y$ is a $d$-feebly compact subspace of
$\left(\exp_n\lambda,\tau\right)$.

It is obvious that the subsemilattice ${\uparrow}y$ of
$\exp_n\lambda$ is algebraically isomorphic to the semilattice
$\exp_k\lambda$ for some positive integer $k\leqslant n$. This and
above arguments imply that without loss of generality we may assume
that $y$ is the isolated zero of the $d$-feebly compact
semitopological semilattice $\left(\exp_n\lambda,\tau\right)$.

Hence we assume that $\tau$ is a $d$-feebly compact shift-continuous
topology on $\exp_n\lambda$ such that the zero $0$ of
$\exp_n\lambda$ is an isolated point of
$\left(\exp_n\lambda,\tau\right)$. Next we fix an arbitrary infinite
sequence $\left\{x_i\right\}_{i\in\mathbb{N}}$ of distinct elements
of cardinal $\lambda$. For every positive integer $j$ we put
\begin{equation*}
  a_j=\left\{x_{n(j-1)+1},x_{n(j-1)+2},\ldots,x_{nj}\right\}.
\end{equation*}
Then $a_j\in\exp_n\lambda$ and moreover $a_j$ is a greatest element of the semilattice $\exp_n\lambda$ for each positive integer $j$. Also, the definition of  the semilattice $\exp_n\lambda$ implies that for every non-zero element $a$ of $\exp_n\lambda$ there exists at most one element $a_j$ such that $a_j\in{\uparrow}a$. Then for every positive integer $j$ by Proposition~1$(iii)$ of \cite{Gutik-Sobol-2016}, $a_j$ is an isolated point of $\left(\exp_n\lambda,\tau\right)$, and hence the above arguments imply that $\left\{a_1,a_2,\ldots,a_j,\ldots\right\}$ is an infinite discrete family of open subset in the space $\left(\exp_n\lambda,\tau\right)$. This contradicts the $d$-feeble compactness of the semitopological semilattice $\left(\exp_n\lambda,\tau\right)$. The obtained contradiction implies the statement of our proposition.  
\end{proof}

The following example show that the converse statement to Proposition~\ref{proposition-2} is not true in the case of topological semilattices.

\begin{example}\label{example-3}
Fix an arbitrary cardinal $\lambda$ and an infinite subset $A$ in
$\lambda$ such that $\left|\lambda\setminus
A\right|\geqslant\omega$. By $\pi\colon\lambda\to\exp_1\lambda\colon
a\mapsto\left\{a\right\}$ we denote the natural embedding of
$\lambda$ into $\exp_1\lambda$. On $\exp_1\lambda$ we define a
topology $\tau_{\operatorname{\textsf{dm}}}$ in the following way:
\begin{itemize}
  \item[$(i)$] all non-zero elements of the semilattice $\exp_1\lambda$ are isolated points in
  $\left(\exp_1\lambda,\tau_{\operatorname{\textsf{dm}}}\right)$; \; and
  \item[$(ii)$] the family $\mathscr{B}_{\operatorname{\textsf{dm}}}=\left\{U_B=\{0\}\cup\pi(B)\colon B\subseteq A \hbox{~and~} A\setminus B \hbox{~is finite}\right\}$ is the base of the topology $\tau_{\operatorname{\textsf{dm}}}$ at zero $0$ of $\exp_1\lambda$.
\end{itemize}
Simple verifications show that $\tau_{\operatorname{\textsf{dm}}}$ is a Hausdorff locally compact semilattice topology on $\exp_1\lambda$ which is not compact and hence by Corollary~8 of \cite{Gutik-Sobol-2016} it is not feebly compact.
\end{example}

\begin{remark}\label{remark-4}
We observe that in the case when $\lambda=\omega$ by Proposition~13 of \cite{Gutik-Sobol-2016} the topological space $\left(\exp_1\lambda,\tau_{\operatorname{\textsf{dm}}}\right)$ is collectionwise normal and it has a countable base, and hence $\left(\exp_1\lambda,\tau_{\operatorname{\textsf{dm}}}\right)$ is metrizable by the Urysohn Metrization Theorem \cite{Urysohn-1925}. Moreover, if $|B|=\omega$ then the space $\left(\exp_1\lambda,\tau_{\operatorname{\textsf{dm}}}\right)$ is metrizable for any infinite cardinal $\lambda$, as a topological sum of the metrizable space $\left(\exp_1\omega,\tau_{\operatorname{\textsf{dm}}}\right)$ and the discrete space of cardinality~$\lambda$.
\end{remark}

\begin{remark}\label{remark-5}
If $n$ is an arbitrary positive integer $\geqslant 3$, $\lambda$ is
any infinite cardinal and $\tau_{\operatorname{\textsf{c}}}^n$ is
the unique compact semilattice topology on the semilattice
$\exp_n\lambda$ defined in Example~4 of \cite{Gutik-Sobol-2016},
then we construct more stronger topology
$\tau_{\operatorname{\textsf{dm}}}^n$ on $\exp_n\lambda$ them
$\tau_{\operatorname{\textsf{c}}}^n$ in the following way. Fix an
arbitrary element $x\in \exp_n\lambda$ such that $|x|=n-1$. It is
easy to see that the subsemilattice ${\uparrow}x$ of $\exp_n\lambda$
is isomorphic to $\exp_1\lambda$, and by
$h\colon\exp_1\lambda\to{\uparrow}x$ we denote this isomorphism.

Fix an arbitrary subset $A$ in $\lambda$ such that
$\left|\lambda\setminus A\right|\geqslant\omega$. For every zero
element $y\in\exp_n\lambda\setminus{\uparrow}x$ we assume that the base
$\mathscr{B}_{\operatorname{\textsf{dm}}}^n(y)$ of the topology
$\tau_{\operatorname{\textsf{dm}}}^n$ at the point $y$ coincides
with the base of the topology $\tau_{\operatorname{\textsf{c}}}^n$
at $y$, and assume that ${\uparrow}x$ is an open-and-closed subset and the
topology on ${\uparrow}x$ is generated by the map
$h\colon\left(\exp_2\lambda,\tau_{\operatorname{\textsf{fc}}}^2\right)\to{\uparrow}x$.
We observe that
$\left(\exp_n\lambda,\tau_{\operatorname{\textsf{dm}}}^n\right)$ is
a Hausdorff locally compact topological space, because it is the
topological sum of a Hausdorff locally compact space ${\uparrow}x$
(which is homeomorphic to the Hausdorff locally compact space
$\left(\exp_1\lambda,\tau_{\operatorname{\textsf{dm}}}\right)$ from
Example~\ref{example-3}) and an open-and-closed subspace
$\exp_n\lambda\setminus{\uparrow}x$ of
$\left(\exp_n\lambda,\tau_{\operatorname{\textsf{c}}}^n\right)$. It
is obvious that the set $\exp_n\lambda\setminus\exp_{n-1}\lambda$ is
dense in
$\left(\exp_n\lambda,\tau_{\operatorname{\textsf{dm}}}^n\right)$.
Also, since ${\uparrow}x$ is an open-and-closed subsemilattice with
zero $x$ of
$\left(\exp_n\lambda,\tau_{\operatorname{\textsf{dm}}}^n\right)$,
the continuity of the semilattice operations in
$\left(\exp_n\lambda,\tau_{\operatorname{\textsf{dm}}}^n\right)$ and
$\left(\exp_n\lambda,\tau_{\operatorname{\textsf{c}}}^n\right)$ and
the property that the topology $\tau_{\operatorname{\textsf{dm}}}^n$
is more stronger them $\tau_{\operatorname{\textsf{c}}}^n$, imply
that
$\left(\exp_n\lambda,\tau_{\operatorname{\textsf{dm}}}^n\right)$ is
a topological semilattice.  Moreover, the space
$\left(\exp_n\lambda,\tau_{\operatorname{\textsf{dm}}}^n\right)$ is
not $d$-feebly compact, because it contains an open-and-closed
non-$d$-feebly compact subspace ${\uparrow}x$.
\end{remark}

Arguments presented in the proof of Proposition~\ref{proposition-2} and Proposition~1$(iii)$ of \cite{Gutik-Sobol-2016} imply the following corollary.

\begin{corollary}\label{corollary-6}
Let $n$ be an arbitrary positive integer and $\lambda$ be an arbitrary infinite cardinal. Then for every $d$-feebly compact shift-continuous $T_1$-topology $\tau$ on $\exp_n\lambda$ a point $x$ is isolated in $\left(\exp_n\lambda,\tau\right)$ if and only if $x\in\exp_n\lambda\setminus\exp_{n-1}\lambda$.
\end{corollary}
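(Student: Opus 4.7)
The plan is to establish the two implications of the equivalence separately, both as direct applications of the two results cited in the hint.

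For the direction $(\Leftarrow)$, suppose $x\in\exp_n\lambda\setminus\exp_{n-1}\lambda$, so $|x|=n$. Since any element of $\exp_n\lambda$ containing $x$ as a subset must have cardinality between $n$ and $n$, it equals $x$; that is, ${\uparrow}x=\{x\}$. Proposition~1$(iii)$ of~\cite{Gutik-Sobol-2016} says that ${\uparrow}x$ is an open-and-closed subset of $\left(\exp_n\lambda,\tau\right)$, so $\{x\}$ is open, which is the definition of $x$ being isolated.

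For the direction $(\Rightarrow)$, I argue the contrapositive: every point $x\in\exp_{n-1}\lambda$ fails to be isolated. By Proposition~\ref{proposition-2}, under the hypothesis that $\tau$ is a $d$-feebly compact shift-continuous $T_1$-topology, the subset $\exp_n\lambda\setminus\exp_{n-1}\lambda$ is dense in $\left(\exp_n\lambda,\tau\right)$. In particular, every non-empty open subset of $\left(\exp_n\lambda,\tau\right)$ meets $\exp_n\lambda\setminus\exp_{n-1}\lambda$. If $x\in\exp_{n-1}\lambda$ were isolated, the singleton $\{x\}$ would be a non-empty open set, and yet $\{x\}\cap(\exp_n\lambda\setminus\exp_{n-1}\lambda)=\varnothing$ because $x\in\exp_{n-1}\lambda$, a contradiction.

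I do not anticipate any genuine obstacle here. The only thing worth double-checking is that the preparatory observation ``${\uparrow}x$ is open-and-closed'' from Proposition~1$(iii)$ of~\cite{Gutik-Sobol-2016} applies under the present hypotheses (which it does, since it only needs $\tau$ to be a shift-continuous $T_1$-topology), and that Proposition~\ref{proposition-2} really does yield density rather than some weaker statement. Both verifications are immediate, which is why the corollary is indeed a one-line consequence of the two cited results, as the hint preceding the statement already suggests.
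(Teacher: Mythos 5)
Your proof is correct and follows essentially the route the paper intends: maximal elements are isolated because ${\uparrow}x=\{x\}$ is open by Proposition~1$(iii)$ of \cite{Gutik-Sobol-2016}, and no point of $\exp_{n-1}\lambda$ can be isolated because Proposition~\ref{proposition-2} makes $\exp_n\lambda\setminus\exp_{n-1}\lambda$ dense. Both cited facts apply under the stated hypotheses, so nothing further is needed.
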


\begin{remark}\label{remark-7}
We observe that the example presented in Remark~\ref{remark-5} implies there exists a locally compact non-$d$-feebly compact semitopological semilattice $\left(\exp_n\lambda,\tau_{\operatorname{\textsf{dm}}}^n\right)$ with the following property: \emph{a point $x$ is isolated in $\left(\exp_n\lambda,\tau_{\operatorname{\textsf{dm}}}^n\right)$ if and only if $x\in\exp_n\lambda\setminus\exp_{n-1}\lambda$}.
\end{remark}

The following proposition gives an amazing property of the system of neighbourhoodd of zero in a $T_1$-feebly compact semitopological semilattice $\exp_n\lambda$.

\begin{proposition}\label{proposition-8}
Let $n$ be an arbitrary positive integer, $\lambda$ be an arbitrary infinite cardinal and $\tau$ be a shift-continuous feebly compact $T_1$-topology on the semilattice $\exp_n\lambda$. Then for every open neighbourhood $U(0)$ of zero $0$ in $\left(\exp_n\lambda,\tau\right)$ there exist finitely many $x_1,\ldots,x_m\in\lambda$ such that
\begin{equation*}
\exp_n\lambda\setminus\operatorname{cl}_{\exp_n\lambda}(U(0))\subseteq{\uparrow}x_1\cup\cdots\cup{\uparrow}x_m.
\end{equation*}
\end{proposition}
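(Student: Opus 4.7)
I would argue by contradiction, assuming there exists an open neighbourhood $U(0)$ of $0$ in $(\exp_n\lambda,\tau)$ such that no finite union ${\uparrow}x_1\cup\cdots\cup{\uparrow}x_m$ covers $\exp_n\lambda\setminus\operatorname{cl}_{\exp_n\lambda}(U(0))$. Equivalently, for every finite set $F\subset\lambda$ there exists $A\in\exp_n\lambda$ with $A\cap F=\varnothing$ and $A\notin\operatorname{cl}_{\exp_n\lambda}(U(0))$. The plan is to exhibit an infinite locally finite family of non-empty open subsets of $(\exp_n\lambda,\tau)$, which contradicts feeble compactness in the Hausdorff setting recalled in the introduction.

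\medskip

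The first step is to construct inductively an infinite sequence $\{B_j\}_{j\in\mathbb{N}}$ of pairwise disjoint elements of $\exp_n\lambda$ with $|B_j|=n$ and $B_j\notin\operatorname{cl}_{\exp_n\lambda}(U(0))$. At stage $k+1$, having already chosen $B_1,\ldots,B_k$, the set
\begin{equation*}
W_{k+1}=\bigl(\exp_n\lambda\setminus\operatorname{cl}_{\exp_n\lambda}(U(0))\bigr)\cap\bigcap_{y\in B_1\cup\cdots\cup B_k}\bigl(\exp_n\lambda\setminus{\uparrow}\{y\}\bigr)
\end{equation*}
is open --- each ${\uparrow}\{y\}$ is open-and-closed by Proposition~1$(iii)$ of \cite{Gutik-Sobol-2016} and the intersection is finite --- and it is non-empty by the contradiction hypothesis applied to $F=B_1\cup\cdots\cup B_k$. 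Since $\exp_n\lambda\setminus\exp_{n-1}\lambda$ is dense in $(\exp_n\lambda,\tau)$ by Proposition~\ref{proposition-2}, I may pick $B_{k+1}\in W_{k+1}$ of cardinality $n$. Corollary~\ref{corollary-6} then guarantees that each $B_j$ is isolated, so each singleton $\{B_j\}$ is open.

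\medskip

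The main obstacle is to verify that the family $\bigl\{\{B_j\}\bigr\}_{j\in\mathbb{N}}$ is locally finite, i.e., that no point $p\in\exp_n\lambda$ has every open neighbourhood meeting $S=\{B_j:j\in\mathbb{N}\}$ in infinitely many members. Suppose such a $p$ exists; then $p\in\operatorname{cl}_{\exp_n\lambda}(S)$. Shift-continuity of $\tau$ makes the map $f_p\colon x\mapsto p\cap x$ continuous. Since the $B_j$'s are pairwise disjoint and $|p|\leqslant n$, the image $f_p(S)=\{p\cap B_j:j\in\mathbb{N}\}$ is a finite set and hence closed in the $T_1$-space $(\exp_n\lambda,\tau)$. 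By continuity, $p=f_p(p)\in\operatorname{cl}_{\exp_n\lambda}(f_p(S))=f_p(S)$, which forces $p\subseteq B_{j_0}$ for some $j_0$. If $p=0$, then the chosen $U(0)$ is a neighbourhood of $p$ disjoint from $S$, contradicting the choice of $p$. Otherwise, the same reasoning applied to the continuous map $x\mapsto B_{j_0}\cap x$, whose image of $S$ equals the two-point set $\{0,B_{j_0}\}$, forces $p\in\{0,B_{j_0}\}$; hence $p=B_{j_0}$, but then $\{B_{j_0}\}$ itself is a neighbourhood of $p$ meeting $S$ in only one point --- again a contradiction. The resulting infinite locally finite family $\bigl\{\{B_j\}\bigr\}_{j\in\mathbb{N}}$ of non-empty open sets contradicts the feeble compactness of $(\exp_n\lambda,\tau)$, completing the proof.
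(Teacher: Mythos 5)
Your proof is correct and follows the same overall strategy as the paper's: argue by contradiction and produce an infinite locally finite family of open singletons consisting of isolated points of $\exp_n\lambda\setminus\exp_{n-1}\lambda$ lying outside $\operatorname{cl}_{\exp_n\lambda}(U(0))$, contradicting feeble compactness via the Hausdorffness supplied by Proposition~1 of \cite{Gutik-Sobol-2016}. The differences are in the details. First, your induction yields \emph{pairwise disjoint} $n$-element sets $B_j$ (by removing the finitely many open-and-closed sets ${\uparrow}\{y\}$, $y\in B_1\cup\cdots\cup B_k$, before applying the density from Proposition~\ref{proposition-2}), whereas the paper's induction produces isolated points $m_k\in{\uparrow}y_k\setminus\left({\uparrow}y_1\cup\cdots\cup{\uparrow}y_{k-1}\right)$ that need not be pairwise disjoint; your variant is closer in spirit to the construction inside the proof of Proposition~\ref{proposition-2} and makes the verification of local finiteness cleaner. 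Second, where the paper appeals to ``similar arguments as in the proof of Proposition~\ref{proposition-2}'', you verify local finiteness explicitly through continuity of the translations $x\mapsto p\cap x$; this argument is valid, but the pairwise disjointness of the $B_j$ already gives a shorter route: for $p\neq 0$ the open set ${\uparrow}p$ contains $B_j$ only if $p\subseteq B_j$, which can happen for at most one $j$ since the $B_j$ are disjoint and $p$ is non-empty, while $U(0)$ contains no $B_j$ at all. So your family $\bigl\{\{B_j\}\bigr\}$ is in fact discrete, not merely locally finite, and the contradiction is immediate. In short: same skeleton, a slightly different (and arguably tidier) construction of the sequence, and a more elaborate but correct local-finiteness check that could be simplified using the disjointness you built in.
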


\begin{proof}
Suppose to the contrary that there exists an open neighbourhood $U(0)$ of zero in a Hausdorff feebly compact semitopological semilattice $\left(\exp_n\lambda,\tau\right)$ such that
\begin{equation*}
\exp_n\lambda\setminus\operatorname{cl}_{\exp_n\lambda}(U(0))\not\subseteq{\uparrow}x_1\cup\cdots\cup{\uparrow}x_m
\end{equation*}
for any finitely many $x_1,\ldots,x_m\in\lambda$.

We fix an arbitrary $y_1\in\lambda$ such that $\left(\exp_n\lambda\setminus\operatorname{cl}_{\exp_n\lambda}(U(0))\right)\cap{\uparrow}y_1\neq\varnothing$. By Proposition~1$(iii)$ of \cite{Gutik-Sobol-2016} the set ${\uparrow}y_1$ is open in $\left(\exp_n\lambda,\tau\right)$ and hence the set $\left(\exp_n\lambda\setminus\operatorname{cl}_{\exp_n\lambda}(U(0))\right)\cap{\uparrow}y_1$ is open in $\left(\exp_n\lambda,\tau\right)$ too. Then by Proposition~\ref{proposition-2} there exists an isolated point $m_1\in\exp_n\lambda\setminus\exp_{n-1}\lambda$ in $\left(\exp_n\lambda,\tau\right)$ such that $m_1\in\left(\exp_n\lambda\setminus\operatorname{cl}_{\exp_n\lambda}(U(0))\right)\cap{\uparrow}y_1$. Now, by the assumption there exists $y_2\in\lambda$~such~that
\begin{equation*}
\left(\exp_n\lambda\setminus\operatorname{cl}_{\exp_n\lambda}(U(0))\right)\cap\left({\uparrow}y_2 \setminus{\uparrow}y_1\right)\neq\varnothing.
\end{equation*}
Again, since by Proposition~1$(iii)$ of \cite{Gutik-Sobol-2016} both sets ${\uparrow}y_1$ and ${\uparrow}y_2$ are open-and-closed in $\left(\exp_n\lambda,\tau\right)$, Proposition~\ref{proposition-2} implies that there exists an isolated point $m_2\in\exp_n\lambda\setminus\exp_{n-1}\lambda$ in $\left(\exp_n\lambda,\tau\right)$ such that
\begin{equation*}
m_2\in\left(\exp_n\lambda\setminus\operatorname{cl}_{\exp_n\lambda}(U(0))\right)\cap\left({\uparrow}y_2 \setminus{\uparrow}y_1\right).
\end{equation*}

Hence by induction we can construct a sequence $\left\{y_i\colon i=1,2,3,\ldots\right\}$ of distinct points of $\lambda$ and a sequence of isolated points $\left\{m_i\colon i=1,2,3,\ldots\right\}\subset\exp_n\lambda\setminus\exp_{n-1}\lambda$ in $\left(\exp_n\lambda,\tau\right)$ such that for any positive integer $k$ the following conditions hold:
\begin{itemize}
  \item[$(i)$] $\left(\exp_n\lambda\setminus\operatorname{cl}_{\exp_n\lambda}(U(0))\right)\cap\left({\uparrow}y_k \setminus\left({\uparrow}y_1\cup\cdots\cup{\uparrow}y_{k-1}\right)\right)\neq\varnothing$; \; and
  \item[$(ii)$] $m_k\in\left(\exp_n\lambda\setminus\operatorname{cl}_{\exp_n\lambda}(U(0))\right)\cap\left({\uparrow}y_k \setminus\left({\uparrow}y_1\cup\cdots\cup{\uparrow}y_{k-1}\right)\right)$.
\end{itemize}
Then similar arguments as in the proof of Proposition~\ref{proposition-2} imply that the following family
\begin{equation*}
\left\{\left\{m_i\right\}\colon i=1,2,3,\ldots\right\}
\end{equation*}
is infinite and locally finite, which contradicts the feeble compactness of $\left(\exp_n\lambda,\tau\right)$. The obtained contradiction implies the statement of the proposition.  
\end{proof}

Proposition~1$(iii)$ of \cite{Gutik-Sobol-2016} implies that for any element $x\in\exp_n\lambda$ the set ${\uparrow}x$ is open-and-closed in a $T_1$-semitopological semilattice $\left(\exp_n\lambda,\tau\right)$ and hence by Theorem~14 from \cite{Bagley-Connell-McKnight-Jr-1958} we have that for any $x\in\exp_n\lambda$ the space ${\uparrow}x$ is feebly compact in a feebly compact $T_1$-semitopological semilattice $\left(\exp_n\lambda,\tau\right)$. Hence Proposition~\ref{proposition-8} implies the following proposition.

\begin{proposition}\label{proposition-9}
Let $n$ be an arbitrary positive integer, $\lambda$ be an arbitrary infinite cardinal and $\tau$ be a shift-continuous feebly compact $T_1$-topology on the semilattice $\exp_n\lambda$. Then for any point $x\in\exp_n\lambda$ and any open neighbourhood $U(x)$ of $x$ in $\left(\exp_n\lambda,\tau\right)$ there exist finitely many $x_1,\ldots,x_m\in{\uparrow}x\setminus\{x\}$ such that
\begin{equation*}
{\uparrow}x\setminus\operatorname{cl}_{\exp_n\lambda}(U(x))\subseteq{\uparrow}x_1\cup\cdots\cup{\uparrow}x_m.
\end{equation*}
\end{proposition}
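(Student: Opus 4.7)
The plan is to reduce this statement to Proposition~\ref{proposition-8} by passing to the principal filter ${\uparrow}x$. As noted in the paragraph preceding the proposition, ${\uparrow}x$ is open-and-closed in $(\exp_n\lambda,\tau)$ and feebly compact in the subspace topology. If $|x|=n$, then ${\uparrow}x=\{x\}\subseteq U(x)$, both sides of the desired inclusion are empty, and one takes $m=0$; hence I may assume $k:=n-|x|\geqslant 1$.

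The next step is to identify ${\uparrow}x$ with $\exp_k(\lambda\setminus x)$ via the semilattice isomorphism $\varphi\colon{\uparrow}x\to\exp_k(\lambda\setminus x)$, $A\mapsto A\setminus x$, whose inverse sends $B$ to $B\cup x$. Since $x$ is finite and $\lambda$ is infinite, $|\lambda\setminus x|=\lambda$ remains an infinite cardinal, and transporting $\tau|_{{\uparrow}x}$ along $\varphi$ yields a shift-continuous feebly compact $T_1$-topology on $\exp_k(\lambda\setminus x)$ with $\varphi(x)=\varnothing$ as its zero. Shift-continuity, the $T_1$ axiom, and feeble compactness all transfer because ${\uparrow}x$ is a clopen subsemilattice of $(\exp_n\lambda,\tau)$.

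Now I apply Proposition~\ref{proposition-8} to $\exp_k(\lambda\setminus x)$ with the open neighbourhood $\varphi\bigl(U(x)\cap{\uparrow}x\bigr)$ of its zero. Pulling the resulting finite collection back along $\varphi^{-1}$, I obtain $z_1,\ldots,z_m\in\lambda\setminus x$ so that, on setting $x_i:=x\cup\{z_i\}\in{\uparrow}x\setminus\{x\}$,
\begin{equation*}
{\uparrow}x\setminus\operatorname{cl}_{{\uparrow}x}\bigl(U(x)\cap{\uparrow}x\bigr)\subseteq{\uparrow}x_1\cup\cdots\cup{\uparrow}x_m,
\end{equation*}
where each ${\uparrow}x_i$ is the same set whether computed in ${\uparrow}x$ or in $\exp_n\lambda$, because $x\subseteq x_i$. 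Finally, closedness of ${\uparrow}x$ in $(\exp_n\lambda,\tau)$ gives $\operatorname{cl}_{{\uparrow}x}\bigl(U(x)\cap{\uparrow}x\bigr)=\operatorname{cl}_{\exp_n\lambda}\bigl(U(x)\cap{\uparrow}x\bigr)\subseteq\operatorname{cl}_{\exp_n\lambda}(U(x))$, which combined with the display above delivers the required inclusion. The only subtle point is verifying that the shift-continuous feebly compact $T_1$-structure transports correctly along $\varphi$, but this is immediate because the entire argument takes place inside a single clopen subsemilattice of $(\exp_n\lambda,\tau)$.
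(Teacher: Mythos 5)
Your proof is correct and follows essentially the same route as the paper, which derives the proposition from Proposition~\ref{proposition-8} by observing that ${\uparrow}x$ is an open-and-closed (hence feebly compact) subsemilattice isomorphic to a smaller $\exp_k\lambda$ with zero $x$. You merely make explicit the transport of structure along the isomorphism and the closure comparison, plus the trivial case $|x|=n$, none of which changes the argument.
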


The main results of this paper is the following theorem.

\begin{theorem}\label{theorem-9}
Let $n$ be an arbitrary positive integer and $\lambda$ be an arbitrary infinite cardinal. Then for any shift-continuous $T_1$-topology $\tau$ on $\exp_n\lambda$ the following conditions are equivalent:
\begin{itemize}
  \item[$(i)$] $\tau$ is countably pracompact;
  \item[$(ii)$] $\tau$ is feebly compact;
  \item[$(iii)$] $\tau$ is $d$-feebly compact;
  \item[$(iv)$] the space $\left(\exp_n\lambda,\tau\right)$ is $H$-closed.
\end{itemize}
\end{theorem}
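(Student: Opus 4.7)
The plan is to close the cycle of implications. The chain $(i)\Rightarrow(ii)\Rightarrow(iii)$ together with $(iv)\Rightarrow(ii)$ holds in arbitrary topological spaces and is already recorded in the introductory remarks; consequently it suffices to establish $(iii)\Rightarrow(i)$ and $(ii)\Rightarrow(iv)$.

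The implication $(iii)\Rightarrow(i)$ follows almost immediately from what is already in place. By Proposition~\ref{proposition-2} the set $\exp_n\lambda\setminus\exp_{n-1}\lambda$ is dense in $(\exp_n\lambda,\tau)$, and by Corollary~\ref{corollary-6} it coincides with the set of isolated points of $(\exp_n\lambda,\tau)$; hence it is a dense discrete subspace. Since any shift-continuous $T_1$-topology on $\exp_n\lambda$ is Hausdorff (as noted after Lemma~\ref{lemma-1}), Lemma~\ref{lemma-1} delivers the desired countable pracompactness.

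For $(ii)\Rightarrow(iv)$ I would appeal to the standard criterion that a Hausdorff space $X$ is $H$-closed if and only if every open cover $\mathcal{U}$ of $X$ admits a finite subfamily $\mathcal{U}_0\subseteq\mathcal{U}$ with $\bigcup\{\operatorname{cl}_X(U):U\in\mathcal{U}_0\}=X$, and proceed by induction on $n$. The base $n=1$ is direct: pick $U_0\in\mathcal{U}$ containing the zero $\varnothing$; since ${\uparrow}\{x\}=\{\{x\}\}$ in $\exp_1\lambda$, Proposition~\ref{proposition-8} forces $\exp_1\lambda\setminus\operatorname{cl}_{\exp_1\lambda}(U_0)$ to be finite, and covering each remaining singleton by a single member of $\mathcal{U}$ yields the required finite subfamily. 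For the inductive step, having chosen $U_0\in\mathcal{U}$ with $\varnothing\in U_0$, Proposition~\ref{proposition-8} yields finitely many $x_1,\ldots,x_m\in\lambda$ with
\[
\exp_n\lambda\setminus\operatorname{cl}_{\exp_n\lambda}(U_0)\subseteq{\uparrow}\{x_1\}\cup\cdots\cup{\uparrow}\{x_m\}.
\]
Each ${\uparrow}\{x_i\}$ is open-and-closed in $(\exp_n\lambda,\tau)$ by Proposition~1$(iii)$ of~\cite{Gutik-Sobol-2016}, hence feebly compact in its subspace topology (by Theorem~14 of~\cite{Bagley-Connell-McKnight-Jr-1958}, as already invoked before Proposition~\ref{proposition-9}), and is isomorphic as a semitopological semilattice to $\exp_{n-1}\lambda$. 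The inductive hypothesis applied to ${\uparrow}\{x_i\}$ then yields finitely many $U_{i,1},\ldots,U_{i,k_i}\in\mathcal{U}$ whose closures in ${\uparrow}\{x_i\}$ cover ${\uparrow}\{x_i\}$; because ${\uparrow}\{x_i\}$ is closed in $\exp_n\lambda$ these closures coincide with $\operatorname{cl}_{\exp_n\lambda}(U_{i,j})\cap{\uparrow}\{x_i\}$, and so adjoining $U_0$ to all of the $U_{i,j}$ produces a finite subfamily of $\mathcal{U}$ whose closures cover $\exp_n\lambda$.

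The main technical point is the bookkeeping in the inductive step: one must verify that the subspace topology inherited by ${\uparrow}\{x_i\}$ still lies within the scope of the inductive hypothesis, and that closures computed in ${\uparrow}\{x_i\}$ can be replaced by closures in $\exp_n\lambda$ intersected with ${\uparrow}\{x_i\}$. Both are routine once ${\uparrow}\{x_i\}$ is known to be open-and-closed. Beyond this, the proof is essentially a direct assembly of Propositions~\ref{proposition-2} and~\ref{proposition-8}, Corollary~\ref{corollary-6}, and Lemma~\ref{lemma-1}.
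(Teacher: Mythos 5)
Your proof is correct, and most of it coincides with the paper's: the chain $(i)\Rightarrow(ii)\Rightarrow(iii)$ and the implication $(iv)\Rightarrow(ii)$ are dispatched by the same general facts, and $(iii)\Rightarrow(i)$ is obtained exactly as in the paper from Lemma~\ref{lemma-1}, Proposition~\ref{proposition-2} and the Hausdorffness guaranteed by Proposition~1 of \cite{Gutik-Sobol-2016}. The genuine difference is in $(ii)\Rightarrow(iv)$. Both arguments are inductions on $n$ whose engine is Proposition~\ref{proposition-8}, but the paper certifies $H$-closedness externally: it takes an arbitrary Hausdorff superspace $X$, separates a point $x$ outside $\exp_k\lambda$ from the zero by disjoint open sets $U(x)$ and $U(0)$, notes that $U(x)\cap\exp_k\lambda\subseteq{\uparrow}x_1\cup\cdots\cup{\uparrow}x_m$ because $U(x)$ misses $\operatorname{cl}_X(U(0))$, and uses the inductive hypothesis to conclude that these upper sets are $H$-closed and hence closed in $X$, so that $U(x)\setminus\left({\uparrow}x_1\cup\cdots\cup{\uparrow}x_m\right)$ witnesses $x\notin\operatorname{cl}_X(\exp_k\lambda)$. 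You instead work entirely inside the space, via the classical Alexandroff--Urysohn characterization of $H$-closedness by open covers with finite subfamilies having dense union. Your route never leaves $\exp_n\lambda$ and handles the base case $n=1$ uniformly by Proposition~\ref{proposition-8} (the paper instead quotes Corollary~2 of \cite{Gutik-Sobol-2016}, i.e.\ compactness of a feebly compact $\exp_1\lambda$), at the mild cost of importing that cover characterization, which the paper nowhere states. The two bookkeeping points you flag are indeed the only things to verify, and both go through: ${\uparrow}\{x_i\}$ with its subspace topology is a feebly compact shift-continuous $T_1$ semitopological semilattice isomorphic to $\exp_{n-1}\lambda$, so the inductive hypothesis applies, and since ${\uparrow}\{x_i\}$ is closed, closures taken there are contained in the corresponding closures in $\exp_n\lambda$.
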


\begin{proof}
Implications $(i)\Rightarrow(ii)$ and $(ii)\Rightarrow(iii)$ are trivial and  implication $(iii)\Rightarrow(i)$ follows from Proposition~1 of \cite{Gutik-Sobol-2016}, Lemma~\ref{lemma-1} and Proposition~\ref{proposition-2}.

Implication $(iv)\Rightarrow(ii)$ follows from Proposition~4 of \cite{Gutik-Ravsky-2015a}.

$(ii)\Rightarrow(iv)$ We shall prove this implication by induction.

By Corollary~2 from \cite{Gutik-Sobol-2016} every feebly compact $T_1$-topology $\tau$ on the semilattice $\exp_1\lambda$ such that $\left(\exp_1\lambda,\tau\right)$ is a semitopological semilattice, is compact, and hence $\left(\exp_1\lambda,\tau\right)$ is an $H$-closed topological space.

Next we shall show that if our statements holds for all positive integers $j<k\leqslant n$ then it holds for $j=k$. Suppose that a feebly compact $T_1$-semitopological semilattice $\left(\exp_k\lambda,\tau\right)$ is a subspace of Hausdorff topological space $X$. Fix an arbitrary point $x\in X$ and an arbitrary open neighbourhood $V(x)$ of $x$ in $X$. Since $X$ is Hausdorff, there exist disjoint open neighbourhoods $U(x)\subseteq V(x)$ and $U(0)$ of $x$ and zero $0$ of the semilattice $\exp_k\lambda$ in $X$, respectively. Then $\operatorname{cl}_X(U(0))\cap U(x)=\varnothing$ and hence by Proposition~\ref{proposition-8} there exists finitely many $x_1,\ldots,x_m\in\lambda$ such that
\begin{equation*}
\exp_k\lambda\cap U(x)\subseteq{\uparrow}x_1\cup\cdots\cup{\uparrow}x_m.
\end{equation*}
But for any $x\in\lambda$ the subsemilattice ${\uparrow}x$ of $\exp_k\lambda$ is algebraically isomorphic to the semilattice $\exp_{k-1}\lambda$. Then by Proposition~1$(iii)$ of \cite{Gutik-Sobol-2016} and Theorem~14 from \cite{Bagley-Connell-McKnight-Jr-1958}, ${\uparrow}x$ is a feebly compact $T_1$-semitopological semilattice, and the assumption of our induction implies that ${\uparrow}x_1,\cdots,{\uparrow}x_m$ are closed subsets of $X$. This implies that
\begin{equation*}
W(x)=U(x)\setminus\left({\uparrow}x_1\cup\cdots\cup{\uparrow}x_m\right)
\end{equation*}
is an open neighbourhood of $x$ in $X$ such that $W(x)\cap \exp_k\lambda=\varnothing$. Thus, $\left(\exp_k\lambda,\tau\right)$ is an $H$-closed space. This completes the proof of the requested implication.  
\end{proof}

The following theorem gives a sufficient condition when a $d$-feebly compact space is feebly compact.

\begin{theorem}\label{theorem-10}
Every quasiregular $d$-feebly compact space is feebly compact.
\end{theorem}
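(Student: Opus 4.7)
The plan is to prove the contrapositive: if $X$ is quasiregular but not feebly compact, then $X$ fails to be $d$-feebly compact. Because adjoining $X$ itself to any infinite locally finite family of non-empty open sets produces an infinite locally finite open cover, the failure of feeble compactness lets me start from an infinite locally finite family $\{U_n\}_{n\in\omega}$ of non-empty open sets (any countable infinite subfamily of a locally finite family remains locally finite). The goal is to extract from this data an infinite \emph{discrete} family of non-empty open sets, directly contradicting the $d$-feeble compactness hypothesis.

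The first step is a single application of quasiregularity to replace each $U_n$ by a non-empty open $V_n\subseteq U_n$ with $\operatorname{cl}_X(V_n)\subseteq U_n$; the resulting family $\{V_n\}$ is still locally finite. The heart of the argument is an inductive construction producing indices $n_1<n_2<\cdots$, finite sets $F_k\subset\omega$ supplied by local finiteness, and non-empty open sets $W_k$ with $\operatorname{cl}_X(W_k)\subseteq V_{n_k}$, $\operatorname{cl}_X(W_k)\cap V_m=\varnothing$ whenever $m\notin F_k$, and $\operatorname{cl}_X(W_k)\cap\operatorname{cl}_X(W_j)=\varnothing$ whenever $j<k$. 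At step $k+1$ I would choose $n_{k+1}$ strictly greater than $n_k$ and outside the finite set $F_1\cup\cdots\cup F_k$; this forces $V_{n_{k+1}}$ to be disjoint from each previous $\operatorname{cl}_X(W_j)$. A point $x_{k+1}\in V_{n_{k+1}}$ then has an open neighbourhood $O_{k+1}$ meeting $V_m$ only for $m$ in a finite set $F_{k+1}$, and quasiregularity applied inside the open set $O_{k+1}\cap V_{n_{k+1}}\setminus\bigcup_{j\leq k}\operatorname{cl}_X(W_j)$ delivers the required $W_{k+1}$ with closure contained in it.

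The family $\{W_k\}_{k\in\omega}$ is then locally finite with pairwise disjoint closures, and I would close by noting that this combination already forces discreteness: any $x\in X$ has a neighbourhood $N$ meeting only finitely many $W_{k_1},\ldots,W_{k_r}$, and since the $\operatorname{cl}_X(W_{k_i})$ are pairwise disjoint $x$ lies in at most one of them, so removing the other (finitely many) closed sets from $N$ leaves an open neighbourhood of $x$ meeting at most one $W_k$. Thus $\{W_k\}$ is an infinite discrete family of non-empty open sets, contradicting $d$-feeble compactness. I expect the main obstacle to be the bookkeeping in the inductive step: keeping the closures pairwise disjoint requires knowing that $V_{n_{k+1}}$ avoids each earlier $\operatorname{cl}_X(W_j)$, which is exactly what is purchased by tracking the finite ``interaction sets'' $F_j$ from local finiteness and choosing later indices to lie outside their union.
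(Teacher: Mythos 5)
Your proof is correct and follows essentially the same route as the paper: both arguments inductively extract, from the infinite locally finite family witnessing the failure of feeble compactness, an infinite family of non-empty open sets with pairwise disjoint closures (using quasiregularity at each step together with the finiteness of local interactions), and then observe that such a family is discrete, contradicting $d$-feeble compactness. The only difference is bookkeeping — you track finite ``interaction sets'' $F_k$ and choose later indices outside their union, while the paper recursively thins the family to the members missing all previously chosen neighbourhoods — and your closing observation that a locally finite family with pairwise disjoint closures is discrete is a clean substitute for the paper's case analysis.
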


\begin{proof}
Suppose to the contrary that there exists a quasiregular $d$-feebly compact space $X$ which is not feebly compact. Then there exists an infinite locally finite family $\mathscr{U}_0$ of non-empty open subsets of~$X$.

By induction we shall construct an infinite discrete family of non-empty open subsets of $X$.

Fix an arbitrary $U_1\in\mathscr{U}_0$ and an arbitrary point $x_1\in U_1$. Since the family $\mathscr{U}_0$ is locally finite there exists an open neighbourhood $U(x_1)\subseteq U_1$ of the point $x_1$ in $X$ such that $U(x_1)$ intersects finitely many elements of $\mathscr{U}_0$. Also, the quasiregularity of $X$ implies that there exists a non-empty open subset $V_1\subseteq U(x_1)$ such that $\operatorname{cl}_X(V_1)\subseteq U(x_1)$. Put
\begin{equation*}
\mathscr{U}_1=\left\{U\in\mathscr{U}_0\colon U(x_1)\cap U=\varnothing\right\}.
\end{equation*}
Since the family $\mathscr{U}_0$ is locally finite and infinite, so is $\mathscr{U}_1$. Fix an arbitrary $U_2\in\mathscr{U}_1$ and an arbitrary point $x_2\in U_2$. Since the family $\mathscr{U}_1$ is locally finite, there exists an open neighbourhood $U(x_2)\subseteq U_2$ of the point $x_2$ in $X$ such that $U(x_2)$ intersects finitely many elements of $\mathscr{U}_1$. Since $X$ is quasiregular, there exists a non-empty open subset $V_2\subseteq U(x_2)$ such that $\operatorname{cl}_X(V_2)\subseteq U(x_2)$. Our construction implies that the closed sets $\operatorname{cl}_X(V_1)$ and $\operatorname{cl}_X(V_2)$ are disjoint and hence so are  $V_1$ and $V_2$. Next we put
\begin{equation*}
\mathscr{U}_2=\left\{U\in\mathscr{U}_1\colon U(x_2)\cap U=\varnothing\right\}.
\end{equation*}

Also, we observe that it is obvious that $U(x_1)\cap U=\varnothing$ for each $U\in\mathscr{U}_1$.

Suppose for some positive integer $k>1$ we construct:
\begin{itemize}
  \item[$(a)$] a sequence of infinite locally finite subfamilies $\mathscr{U}_1,\ldots, \mathscr{U}_{k-1}$
  in  $\mathscr{U}_0$ of non-empty open subsets in the space $X$;
  \item[$(b)$] a sequence of open subsets $U_1,\ldots, U_k$ in $X$;
  \item[$(c)$] a sequence of points $x_1,\ldots, x_k$ in $X$ and a sequence of their corresponding open neighbourhoods $U(x_1),\ldots,U(x_k)$ in $X$;
  \item[$(d)$] a sequence of disjoint non-empty subsets $V_1,\ldots, V_k$ in $X$
\end{itemize}
such that the following conditions hold:
\begin{itemize}
  \item[$(i)$] $\mathscr{U}_i$ is a proper subfamily of $\mathscr{U}_{i-1}$;
  \item[$(ii)$] $U_i\in \mathscr{U}_{i-1}$ and $U_i\cap U=\varnothing$ for each $U\in \mathscr{U}_j$ with $i\leqslant j\leqslant k$;
  \item[$(iii)$] $x_i\in U_i$ and $U(x_{i})\subseteq U_i$;
  \item[$(iv)$] $V_{i}$ is an open subset of $U_i$ with $\operatorname{cl}_X(V_i)\subseteq U(x_i)$,
\end{itemize}
for all $i=1,\ldots,k$, and
\begin{itemize}
  \item[$(v)$] $\operatorname{cl}_X(V_1),\ldots,\operatorname{cl}_X(V_k)$ are disjoint.
\end{itemize}

Next we put
\begin{equation*}
\mathscr{U}_k=\left\{U\in\mathscr{U}_{k-1}\colon U(x_1)\cap U=\ldots=U(x_k)\cap U=\varnothing\right\}.
\end{equation*}
Since the family $\mathscr{U}_{k-1}$ is infinite and locally finite, there exists a subfamily $\mathscr{U}_k$ in $\mathscr{U}_{k-1}$ which is infinite and locally finite. Fix an arbitrary $U_{k+1}\in\mathscr{U}_k$ and an arbitrary point $x_{k+1}\in U_{k+1}$. Since the family $\mathscr{U}_{k}$ is locally finite, there exists an open neighbourhood $U(x_{k+1})\subseteq U_{k+1}$ of the point $x_{k+1}$ in $X$ such that $U(x_{k+1})$ intersects finitely many elements of $\mathscr{U}_{k}$. Since the space $X$ is quasiregular, there exists a non-empty open subset $V_{k+1}\subseteq U(x_{k+1})$ such that $\operatorname{cl}_X(V_{k+1})\subseteq U(x_{k+1})$. Simple verifications show that the conditions $(i)-(iv)$ hold in the case of the positive integer $k+1$.

Hence by induction we construct the following two infinite countable families of open non-empty subsets of $X$:
\begin{equation*}
\mathscr{U}=\left\{U_i\colon i=1,2,3,\ldots\right\} \qquad \hbox{and} \qquad \mathscr{V}=\left\{V_i\colon i=1,2,3,\ldots\right\}
\end{equation*}
such that $\operatorname{cl}_X(V_i)\subseteq U_i$ for each positive integer $i$. Since $\mathscr{U}$ is a subfamily of $\mathscr{U}_0$ and $\mathscr{U}_0$ is locally finite in $X$, $\mathscr{U}$ is locally finite  in $X$ as well. Also, above arguments imply that $\mathscr{V}$ and
\begin{equation*}
  \overline{\mathscr{V}}=\left\{\operatorname{cl}_X(V_i)\colon i=1,2,3,\ldots\right\}
\end{equation*}
are locally finite families in $X$ too.

Next we shall show that the family $\mathscr{V}$ is discrete in $X$.
Indeed, since the family $\overline{\mathscr{V}}$ is locally finite
in $X$, by Theorem~1.1.11 of~\cite{Engelking-1989} the union
$\bigcup\overline{\mathscr{V}}$ is a closed subset of $X$, and hence
any point $x\in X\setminus\bigcup\overline{\mathscr{V}}$ has an open
neighbourhood $O(x)=X\setminus\bigcup\overline{\mathscr{V}}$ which
does not intersect the elements of the family $\mathscr{V}$. If
$x\in \operatorname{cl}_X(V_i)$ for some positive integer $i$, then
our construction implies that $U(x_i)$ is an open neighbourhood of
$x$ which intersects only the set $V_i\in\mathscr{V}$. Hence $X$ has
an infinite discrete family $\mathscr{V}$ of non-empty open subsets
in $X$, which contradicts the assumption that the space $X$ is
$d$-feebly compact. The obtained contradiction implies the statement
of the theorem.  
\end{proof}

We finish this note by some simple remarks about dense embedding of an infinite semigroup of matrix units and a polycyclic monoid into $d$-feebly compact topological semigroups which follow from the results of the paper \cite{Bardyla-Gutik-2016}.

Let $\lambda$ be a non-zero cardinal. On the set
 $
 B_{\lambda}=(\lambda\times\lambda)\cup\{ 0\}
 $,
where $0\notin\lambda\times\lambda$, we define the semigroup
operation ``$\, \cdot\, $'' as follows
\begin{equation*}
(a, b)\cdot(c, d)=
\left\{
  \begin{array}{cl}
    (a, d), & \hbox{ if~ } b=c;\\
    0, & \hbox{ if~ } b\neq c,
  \end{array}
\right.
\end{equation*}
and $(a, b)\cdot 0=0\cdot(a, b)=0\cdot 0=0$ for $a,b,c,d\in\lambda$. The semigroup $B_{\lambda}$ is called the \emph{semigroup of $\lambda{\times}\lambda$-matrix units}~(see \cite{Clifford-Preston-1961-1967}).

The bicyclic monoid ${\mathscr{C}}(p,q)$ is the semigroup with the identity $1$ generated by two elements $p$ and $q$ subjected only to the condition $pq=1$ \cite{Clifford-Preston-1961-1967}. For a non-zero cardinal $\lambda$, the polycyclic monoid $P_\lambda$ on $\lambda$ generators is the semigroup with zero given by the presentation:
\begin{equation*}
    P_\lambda=\left\langle \left\{p_i\right\}_{i\in\lambda}, \left\{p_i^{-1}\right\}_{i\in\lambda}\mid p_i p_i^{-1}=1, p_ip_j^{-1}=0 \hbox{~for~} i\neq j\right\rangle
\end{equation*}
(see \cite{Bardyla-Gutik-2016}). It is obvious that in the case when $\lambda=1$ the semigroup $P_1$ is isomorphic to the bicyclic semigroup with adjoined zero.

By Theorem~4.4 from \cite{Bardyla-Gutik-2016} for every infinite cardinal $\lambda$ the semigroup of $\lambda{\times}\lambda$-matrix units $B_{\lambda}$ does not densely embed into a Hausdorff feebly compact topological semigroup, and by Theorem~4.5 from \cite{Bardyla-Gutik-2016} for arbitrary cardinal $\lambda\geqslant 2$ there exists no  Hausdorff feebly compact topological semigroup which contains the $\lambda$-polycyclic monoid $P_{\lambda}$ as a dense subsemigroup. These theorems and Lemma~\ref{lemma-1} imply the following two corollaries.

\begin{corollary}
For every infinite cardinal $\lambda$ the semigroup of $\lambda{\times}\lambda$-matrix units $B_{\lambda}$ does not densely embed into a Hausdorff $d$-feebly compact topological semigroup.
\end{corollary}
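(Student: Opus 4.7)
The plan is to argue by contradiction, reducing to Theorem~4.4 of \cite{Bardyla-Gutik-2016} via Lemma~\ref{lemma-1}. I would suppose that $B_\lambda$ embeds densely into some Hausdorff $d$-feebly compact topological semigroup $(S,\tau)$, and derive a contradiction by producing a dense discrete subspace of $S$.

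The main step is to show that every non-zero element of $B_\lambda$ is isolated in the subspace topology inherited from $S$. I would exploit the joint continuity of multiplication in $S$ combined with the rigid algebraic skeleton of $B_\lambda$. For a non-diagonal matrix unit $(a,b)$ with $a\neq b$, the continuous self-map $\phi\colon S\to S$, $\phi(x)=(b,a)\, x\, (b,a)$, satisfies $\phi((a,b))=(b,a)\neq 0$, while a short case check using the multiplication table of $B_\lambda$ shows that $\phi(y)=0$ for every $y\in B_\lambda\setminus\{(a,b)\}$. For a diagonal idempotent $(a,a)$, the continuous self-map $\psi\colon S\to S$, $\psi(x)=x\,(a,a)\,x$, satisfies $\psi((a,a))=(a,a)\neq 0$, while $\psi(y)=0$ for every other $y\in B_\lambda$. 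In each situation, Hausdorff separation of the nonzero image from $0$ yields an open set $U\subseteq S$ whose preimage under $\phi$ (resp.\ $\psi$) is an open neighbourhood of the chosen point meeting $B_\lambda$ only at that point.

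Once this isolation is established, $B_\lambda\setminus\{0\}$ is a discrete subspace of $S$. If $\{0\}$ is open in $S$ then $B_\lambda$ itself is dense and discrete; otherwise $0$ is an accumulation point of $B_\lambda\setminus\{0\}$, so $B_\lambda\setminus\{0\}$ is dense in $S$. In either case, $S$ is a Hausdorff $d$-feebly compact space containing a dense discrete subspace, so Lemma~\ref{lemma-1} gives that $S$ is countably pracompact. Since every countably pracompact space is feebly compact, $S$ is in fact a Hausdorff feebly compact topological semigroup into which $B_\lambda$ densely embeds, contradicting Theorem~4.4 of \cite{Bardyla-Gutik-2016}.

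The principal obstacle is the isolation step: finding the continuous self-maps $\phi$ and $\psi$ and verifying (via case analysis on the matrix-unit product) that they truly annihilate every element of $B_\lambda$ outside the intended fixed point. Once that is secured, the chain Lemma~\ref{lemma-1} $\Rightarrow$ feeble compactness $\Rightarrow$ Theorem~4.4 is routine.
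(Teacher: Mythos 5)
Your proof is correct and follows essentially the same route the paper intends: isolate the non-zero matrix units to obtain a dense discrete subspace, apply Lemma~\ref{lemma-1} to upgrade $d$-feeble compactness to countable pracompactness (hence feeble compactness), and then invoke Theorem~4.4 of \cite{Bardyla-Gutik-2016}. The paper leaves the isolation of the non-zero elements implicit (it is essentially contained in \cite{Bardyla-Gutik-2016}), whereas you verify it directly via the continuous translations $\phi$ and $\psi$; your case checks and the two-case density argument for $B_\lambda\setminus\{0\}$ are sound.
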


\begin{corollary}
For arbitrary cardinal $\lambda\geqslant 2$ there exists no  Hausdorff $d$-feebly compact topological semigroup which contains the $\lambda$-polycyclic monoid $P_{\lambda}$ as a dense subsemigroup.
\end{corollary}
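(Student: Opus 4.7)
The plan is to argue by contradiction and use Lemma~\ref{lemma-1} as the bridge that promotes $d$-feeble compactness to feeble compactness, after which the corollary reduces to Theorem~4.5 of \cite{Bardyla-Gutik-2016}. So I would begin by supposing that $S$ is a Hausdorff $d$-feebly compact topological semigroup containing $P_\lambda$ (with $\lambda\geqslant 2$) as a dense subsemigroup, and aim to derive a contradiction.

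The crucial intermediate step is to observe that $P_\lambda$ sits inside $S$ as a \emph{discrete} subspace. This should be extracted from the arguments of \cite{Bardyla-Gutik-2016}: for $\lambda\geqslant 2$ every shift-continuous Hausdorff topology on $P_\lambda$ is discrete, so the subspace topology that $P_\lambda$ inherits from the Hausdorff topological semigroup $S$ must be discrete. Granted this, $S$ is a Hausdorff $d$-feebly compact space with a dense discrete subspace, and Lemma~\ref{lemma-1} applies to give that $S$ is countably pracompact. Since every countably pracompact space is feebly compact (recorded in the preliminaries after the list of definitions), $S$ becomes a Hausdorff feebly compact topological semigroup containing $P_\lambda$ densely, which directly contradicts Theorem~4.5 of \cite{Bardyla-Gutik-2016}. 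This finishes the proof.

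The main obstacle, modest as it is, is the verification that the subspace topology on $P_\lambda$ inside any Hausdorff topological semigroup is necessarily discrete; everything else is a formal chain of implications. Once this discreteness fact from \cite{Bardyla-Gutik-2016} is invoked correctly, the entire argument is a one-line composition: \emph{$d$-feebly compact} $+$ \emph{dense discrete subspace} $\Rightarrow$ \emph{countably pracompact} $\Rightarrow$ \emph{feebly compact}, contradicting the cited non-embedding theorem.
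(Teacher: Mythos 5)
Your proposal is correct and follows exactly the route the paper intends: the discreteness of $P_\lambda$ in any Hausdorff semitopological (hence topological) semigroup for $\lambda\geqslant 2$, established in \cite{Bardyla-Gutik-2016}, makes Lemma~\ref{lemma-1} applicable, upgrading $d$-feeble compactness to countable pracompactness and hence feeble compactness, which contradicts Theorem~4.5 of \cite{Bardyla-Gutik-2016}. The paper gives no further detail beyond citing these same ingredients, so your argument matches it.
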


The proof of the following corollary is similar to Theorem~5.1(5) from \cite{Banakh-Dimitrova-Gutik-2010}.

\begin{corollary}
There exists no  Hausdorff topological semigroup with the $d$-feebly compact square which contains the bicyclic monoid ${\mathscr{C}}(p,q)$ as a dense subsemigroup.
\end{corollary}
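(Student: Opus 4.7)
The plan is to argue by contradiction. Suppose that $\mathscr{C}(p,q)$ embeds densely as a subsemigroup into a Hausdorff topological semigroup $S$ whose square $S\times S$ is $d$-feebly compact. The strategy is then to construct an infinite discrete family of non-empty open subsets of $S\times S$, contradicting the $d$-feeble compactness hypothesis on $S\times S$.

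First I would record the relations in $S$ inherited from the bicyclic monoid: for every positive integer $n$ one has $p^{n}q^{n}=1$, while the elements $q^{n}p^{n}$ form an infinite strictly descending chain of pairwise distinct idempotents in $S$. Next, using Hausdorffness of $S\times S$ together with joint continuity of the semigroup multiplication $\mu\colon S\times S\to S$, $(x,y)\mapsto xy$, I would choose pairwise disjoint open product-neighbourhoods $W_{n}=U_{n}\times V_{n}$ of the pairs $(p^{n},q^{n})$ in $S\times S$, shrinking the $U_{n},V_{n}$ to be neighbourhoods of $p^{n}$ and $q^{n}$ with $\mu(W_{n})$ lying inside a small neighbourhood of $1$.

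The crux is to upgrade pairwise disjointness to discreteness of the family $\{W_{n}\colon n\in\mathbb{N}\}$. This is precisely where the square enters essentially: any accumulation point $(a,b)\in S\times S$ of the sequence $\{(p^{n},q^{n})\colon n\in\mathbb{N}\}$ would, by continuity of $\mu$, satisfy $ab=1$, while simultaneously the continuous companion map $\nu\colon S\times S\to S$, $(x,y)\mapsto yx$, would force $ba$ to be a Hausdorff limit of the pairwise distinct idempotents $q^{n}p^{n}$. A Hausdorff topological semigroup containing $\mathscr{C}(p,q)$ cannot accommodate such a limit (this is the classical Andersen-type phenomenon exploited in Theorem~5.1(5) of \cite{Banakh-Dimitrova-Gutik-2010}), so $\{(p^{n},q^{n})\}$ must be closed and discrete in $S\times S$. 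The family $\{W_{n}\}$ is then discrete and infinite, contradicting the $d$-feeble compactness of $S\times S$.

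I expect the main obstacle to be the last step, namely ruling out a simultaneous cluster point in $S\times S$ for the sequence $(p^{n},q^{n})$ under both $\mu$ and $\nu$. This requires combining Hausdorffness of $S$ with the algebraic observation that the chain $\{q^{n}p^{n}\}$ of idempotents cannot Hausdorff-converge in any topological semigroup densely containing $\mathscr{C}(p,q)$; this is exactly the point at which the dense embedding of $\mathscr{C}(p,q)$ and the availability of \emph{both} coordinates of the square are used, and it explains why the weaker hypothesis of $d$-feeble compactness of $S$ alone (treated in the preceding corollaries) does not suffice.
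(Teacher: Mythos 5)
Your overall strategy---tracking the sequence $(p^n,q^n)$ in $S\times S$ through both continuous maps $(x,y)\mapsto xy$ and $(x,y)\mapsto yx$ and deriving a contradiction from a hypothetical cluster point---is exactly the strategy of Theorem~5.1(5) of \cite{Banakh-Dimitrova-Gutik-2010}, which is all the paper itself offers here, so black-boxing the ``no cluster point'' step by citation is acceptable (though note that $ba$ need only be an adherent point of every tail of the sequence $q^np^n$, not a limit). The genuine gap is your final inference: from ``$\left\{(p^n,q^n)\colon n\in\mathbb{N}\right\}$ is closed and discrete in $S\times S$'' you conclude that your family $\{W_n\}$ of pairwise disjoint product neighbourhoods is a \emph{discrete} family. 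That implication is false in a general Hausdorff space: a neighbourhood $W_n$ of $(p^n,q^n)$, however disjoint from the others, may accumulate at a point far from all the centres (already in $\mathbb{R}$, take centres $n$ and neighbourhoods $\left(n-\frac12,n+\frac12\right)\cup\left(\frac{1}{n+1},\frac{1}{n}\right)$), and since $S$ is only assumed Hausdorff you cannot shrink the $W_n$ so as to control their closures. The standard repair must be stated explicitly: by the Eberhart--Selden theorem the bicyclic monoid admits only the discrete Hausdorff semigroup topology, and every point of a dense discrete subspace of a $T_1$-space is isolated; hence each $(p^n,q^n)$ is isolated in $S\times S$ and you may take $W_n=\left\{(p^n,q^n)\right\}$, for which closedness and discreteness of the set of centres literally \emph{is} discreteness of the family.

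Once you observe that $\mathscr{C}(p,q)\times\mathscr{C}(p,q)$ is a dense discrete subspace of $S\times S$, there is also a shorter route, and it is the one the authors intend: Lemma~\ref{lemma-1} upgrades $d$-feeble compactness of the square to countable pracompactness, hence to feeble compactness, after which the feebly compact case of Theorem~5.1 of \cite{Banakh-Dimitrova-Gutik-2010} applies directly and your construction of the sets $W_n$ is not needed at all. Either way, the missing ingredient in your write-up is the same: the isolatedness of the points of the densely embedded bicyclic monoid, without which neither the discrete-family argument nor the appeal to Lemma~\ref{lemma-1} goes through.
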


\section*{Acknowledgements}

We acknowledge Alex Ravsky and the referee for their comments and
suggestions.


\begin{thebibliography}{00}

\bibitem{Arkhangelskii-1985}
A. V. Arhangel'skii,
{\em On spaces with point-countable base},
Topological Spaces and Their Mappings,
Riga, 1985, P.~3--7 (in Russian).


\bibitem{Arkhangelskii-1992}
A.~V.~Arkhangel'skii, {\em Topological Function Spaces}, Kluwer
Publ., Dordrecht, 1992.

\bibitem{Bagley-Connell-McKnight-Jr-1958}
R. W. Bagley, E. H. Connell, and J. D. McKnight, Jr.,
\emph{On properties characterizing pseudo-compact spaces},
Proc. Amer. Math. Soc.
\textbf{9}:3 (1958), 500--506.

\bibitem{Banakh-Dimitrova-Gutik-2010}
T.~Banakh, S.~Dimitrova, and O.~Gutik,
\emph{Embedding the bicyclic semigroup into countably compact topological semigroups},
Topology Appl. \textbf{157}:18 (2010), 2803--2814.

\bibitem{Bardyla-Gutik-2016}
S. Bardyla and O. Gutik,
\emph{On a semitopological polycyclic monoid},
Algebra Discr. Math. \textbf{21}:2 (2016), 163--183.

\bibitem{Carruth-Hildebrant-Koch-1983-1986}
J.~H.~Carruth, J.~A.~Hildebrant and  R.~J.~Koch,
\emph{The Theory of Topological Semigroups}, Vol. I, Marcel
Dekker, Inc., New York and Basel, 1983; Vol. II, Marcel Dekker,
Inc., New York and Basel, 1986.

\bibitem{Clifford-Preston-1961-1967}
A.~H.~Clifford and G.~B.~Preston,
\emph{The Algebraic Theory of Semigroups}, Vols. I and II,
Amer. Math. Soc. Surveys {\bf 7}, Providence, R.I.,  1961 and  1967.


\bibitem{Engelking-1989}
R.~Engelking,
\emph{General Topology},
2nd ed., Heldermann, Berlin, 1989.

\bibitem{Gierz-Hofmann-Keimel-Lawson-Mislove-Scott-2003}
G. Gierz, K. H.~Hofmann, K.~Keimel, J.~D.~Lawson, M.~W.~Mislove, and D.~S.~Scott,
\emph{Continuous Lattices and Domains},
Cambridge Univ. Press, Cambridge, 2003.





\bibitem{Gutik-Ravsky-2015a}
O. V. Gutik and O. V. Ravsky,
\emph{Pseudocompactness, products and topological Brandt $\lambda^0$--extensions of
semitopological monoids},
Math. Methods and Phys.-Mech. Fields \textbf{58}:2 (2015), 20--37; reprinted version: J. Math. Sci. \textbf{223}:1 (2017), 18-38.

\bibitem{Gutik-Sobol-2016}
O. Gutik and O. Sobol, \emph{On feebly compact topologies on the
semilattice $\exp_n\lambda$}, Mat. Stud. \textbf{46}:1 (2016),
29--43.

\bibitem{Matveev-1998}
M. Matveev,
\emph{A survey of star covering properties},
Topology Atlas preprint, April 15, 1998.

\bibitem{Ruppert-1984}
W.~Ruppert,
\emph{Compact Semitopological Semigroups: An Intrinsic Theory},
Lect. Notes Math., \textbf{1079}, Springer, Berlin, 1984.

\bibitem{Urysohn-1925}
P. Urysohn,
\emph{Zum Metrisationsproblem},
Math. Ann. \textbf{94} (1925), 309--315.


\end{thebibliography}
\end{document}